\DeclareMathOperator{\ant}{Ant}
\DeclareMathOperator{\rad}{rad}
\DeclareMathOperator{\diam}{diam}
\DeclareMathOperator{\leaf}{Leaf}
\newcommand{\sph}{\mathbb S}
\newcommand{\snr}{\mathbb S_r^N}
\newcommand{\snrx}{\snr(x)}
\newcommand{\sgr}{\mathbb S_r^G}
\newcommand{\sgrx}{\sgr(x)}
\newcommand{\ee}{\left}
\newcommand{\ii}{\right}
\newcommand{\cart}{\, \square \,}
\newcommand{\cartgh}{G \cart H}
\newcommand{\powg}{G^N}
\newcommand{\cc}{\mathbb C}
\newcommand{\kmn}{K_m^N}
\newcommand{\antg}{\ant_G}
\newcommand{\anth}{\ant_H}
\newcommand{\unig}{\ant(G)}
\newcommand{\unih}{\ant(H)}
\newcommand{\antgx}{\antg(x)}
\newcommand{\dg}{d_G}
\newcommand{\eccx}{\epsilon(x)}
\newcommand{\radg}{\rad(G)}
\newcommand{\diamg}{\diam(G)}
\newtheorem{ex}{Example}
\newtheorem{notation}{Notation}
\newtheorem{question}{Question}
\newtheorem{theorem}{Theorem}
\newtheorem{definition}[theorem]{Definition}
\newtheorem{proposition}[theorem]{Proposition}
\newtheorem{cor}[theorem]{Corollary}
\newtheorem{lemma}[theorem]{Lemma}
\newtheorem{remark}[theorem]{Remark}
\numberwithin{theorem}{section}
\begin{document}
\title[Exponential Maximal Bounds for Trees]{Dimensionally Exponential Lower Bounds on the $L^p$ Norms of the Spherical Maximal Operator for Cartesian Powers of Finite Trees and Related Graphs}
\author{Jordan Greenblatt}
\address{Department of Mathematics, UCLA, Los Angeles, CA 90095-1555}
\email{jsg66@math.ucla.edu}
\maketitle
\vspace{-0.3in}
\begin{abstract}
Let $T$ be a finite tree graph, $T^N$ be the Cartesian power graph of $T$, and $d^N$ be the graph distance metric on $T^N$. Also let
\[
\snrx := \{v \in T^N: d^N(x,v) = r\}
\]
be the sphere of radius $r$ centered at $x$ and $M$ be the spherical maximal averaging operator on $T^N$ given by
\[
Mf(x) := \sup_{\substack{r \geq 0 \\ \snrx \neq \emptyset}} \frac{1}{|\snrx|} \ee|\sum_{\snrx} f(y)\ii|.
\]
We will show that for any fixed $1 \leq p \leq \infty$, the $L^p$ operator norm of $M$, i.e.
\[
\|M\|_p := \sup_{\|f\|_p = 1} \|Mf\|_p,
\]
grows exponentially in the dimension $N$. In particular, if $r$ is the probability that a random vertex of $T$ is a leaf, then $\|M\|_p \geq r^{-N/p}$, although this is not a sharp bound.

This exponential growth phenomenon extends to a class of graphs strictly larger than trees, which we will call \emph{global antipode graphs}. It stands in contrast to the results of \cite{HKS}, \cite{BK}, and \cite{GKK} which together prove that the spherical maximal $L^p$ bounds (for $p > 1$) are dimension-independent for finite cliques.
\end{abstract}

\section{Introduction}\label{intro}

Unless otherwise stated, all graphs will be simple, connected, undirected, unweighted, and finite and all functions will be complex valued. For a fixed graph $G$ and $x,y \in V(G)$, we use the notation $x \sim_G y$ to signify that the pair $\{x,y\}$ is in $E(G)$. We also denote by $d_G$ the "shortest distance" metric on $G$, although we sometimes suppress the subscript when the graph is clear from context. We recall the following standard definitions:

\begin{definition}[Cartesian Graph Product]
For graphs $G$ and $H$, the Cartesian product of $G$ and $H$ will be denoted by $G \cart H$. The vertex set $V(G \cart H)$ is the Cartesian set product $V(G) \times V(H)$. Two vertices $(g,h), (g',h') \in V(G \cart H)$ are adjacent in $G \cart H$ if and only if either
\begin{enumerate}
\item $g = g'$ and $h \sim_H h'$ or
\item  $h = h'$ and $g \sim_G g'$.
\end{enumerate}
\end{definition}

\begin{definition}[Cartesian Graph Power]
The $N^{th}$ Cartesian power of a graph $G$ will be denoted by $G^N$. For a positive integer $N$ we inductively define
\[
G^N := \begin{cases} G &\mbox{if } N = 1 \\ 
G^{N-1} \cart G & \mbox{if } N > 1.\end{cases} 
\]
More concretely, $V(G^N)$ is the Cartesian set power $V(G)^N$ and for two vertices $x,y \in V(G^N)$, we say $x \sim_{G^N} y$ if and only if there exists a unique index $1 \leq k \leq N$ such that
\begin{enumerate}
\item for all $j \neq k$, $x_j = y_j$ and
\item $x_k \sim_G y_k$.
\end{enumerate}
\end{definition}

\begin{remark}\label{distancerelation}
By direct computation one observes that for any $(g,h), (g',h') \in V(G \cart H)$,
\[
d_{G \cart H}\big((g,h),(g',h')\big) = d_G(g,g') + d_H(h,h').
\]
Inductively applying this computation, one observes that for any $x,y \in G^N$,
\[
d_{\powg}(x,y) = \sum_{k=1}^N d_G(x_k,y_k).
\]
\end{remark}

We view a graph $G$ as a natural metric measure space under the (integer valued) metric $d_G$ and the counting measure. We define
\[
\sgrx := \{y \in V(G): d_G(x,y) = r\}
\]
to be the radius $r$ sphere in $G$ centered at vertex $x \in V(G)$.

\begin{definition}[Spherical Maximal Operator]
The spherical maximal operator $M^G$ is defined for any $f: G \to \cc$ by
\[
M^Gf(x) := \sup_{\substack{r \geq 0 \\ \sgrx \neq \emptyset}} \frac{1}{|\sgrx|} \ee|\sum_{\sgrx} f(y)\ii|.
\]
\end{definition}

$M^G$ satisfies an immediate operator bound of $1$ on $L^\infty(G)$. Therefore, for any $1 \leq p < \infty$,
\[
\sum_{x \in G} |M^Gf(x)|^p \leq \sum_{x \in G} \|f\|^p_\infty \leq \|f\|^p_\infty |G| \leq \|f\|^p_p |G|
\]
so $M^G$ immediately satisfies an $L^p$ bound of $|G|^{1/p}$.

We will often suppress the superscript $G$. Although it is straightforward to see that $M$ satisfies an $L^p$ bound, the bound $|G|^{1/p}$ is not in general sharp. The optimal bound depends heavily on the structure of $G$. In \cite{HKS}, \cite{BK}, and \cite{GKK} the authors explored the dimensional asymptotics of the $L^p$ bounds on spherical maximal operator for Cartesian powers of finite cliques $G = K_m$ for all $m \geq 2$.

If $p < \infty$, the a priori $L^p$ bound for the spherical maximal operator on $K_m^N$ is exponential in the dimension $N$, namely $m^{N/p}$. By viewing $M$ more carefully as the maximum of $N+1$ contractions on $L^1(\kmn)$ and $L^\infty(\kmn)$, namely the spherical averaging operators, we can bound $M$ in $L^1$ by $N+1$, simply dominating $Mf$ by the sum of all the spherical averages for any $f \geq 0$. Linearizing the maximal operator as in \cite{NS} (top of p. 151) and interpolating between the $L^1$ and $L^\infty$ endpoints provides the improved bound $(N+1)^{1/p}$.

Testing $M$ on a delta mass in $\kmn$ shows that the $L^1$ bound of $N+1$ is sharp. However, the three papers collectively prove the surprising result that if $1 < p < \infty$, $M$ satisfies an $L^p$ bound depending \emph{only} on $m$ and $p$. In particular, the bound is \emph{independent of the dimension $N$}.

The author is currently investigating the full relationship between the structure of a graph and the dimensional asymptotics of its spherical maximal operator's $L^p$ norm. Here we establish a substantial class of graphs, including all tree graphs (adopting the convention that trees have at least $3$ vertices), whose spherical maximal bounds grow exponentially in dimension. For any graph $T$, we use the notation $\leaf(T) := \{t \in T : \deg(t) = 1\}$ to be the set of leaf vertices of $T$.

\begin{theorem}\label{treebound}
Let $T$ be a tree graph. Then, if $M_N$ is the spherical maximal operator on $T^N$, for any $p < \infty$,
\[
\|M_N\|_p^p \geq \ee(\frac{|T|}{|\leaf(T)|}\ii)^N.
\]
In particular, the bound grows exponentially in dimension.
\end{theorem}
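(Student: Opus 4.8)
The plan is to obtain the lower bound by testing $M_N$ against the indicator function of the set of vertices of $T^N$ all of whose coordinates are leaves of $T$. The one structural fact about trees that makes this work is that the farthest vertex from any given vertex of a tree is necessarily a leaf, so I would begin by recording it: writing $\epsilon_T(v):=\max_{w\in V(T)}d_T(v,w)$ and $\ant_T(v):=\{w\in V(T):d_T(v,w)=\epsilon_T(v)\}$, one has $\ant_T(v)\subseteq\leaf(T)$, since a vertex $w\in\ant_T(v)$ of degree at least $2$ would have a neighbour off the (unique) $v$--$w$ path, and extending the path to that neighbour would contradict the maximality of $d_T(v,w)$.

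Next I would transfer this to the power graph. Fix $y\in T^N$ and set $r:=\sum_{k=1}^N \epsilon_T(y_k)$. By the distance formula of Remark~\ref{distancerelation}, $d^N(y,z)=\sum_{k=1}^N d_T(y_k,z_k)\le r$ for every $z\in T^N$, with equality exactly when $z_k\in\ant_T(y_k)$ for all $k$. Hence $\snr(y)$ is nonempty (a finite tree always has a farthest vertex) and $\snr(y)=\{z\in T^N: z_k\in\ant_T(y_k)\ \text{for all }k\}$, which by the previous paragraph is contained in $\leaf(T)^N$.

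Now set $B:=\leaf(T)^N\subseteq V(T^N)$ and $f:=\mathbf{1}_B$, so that $\|f\|_p^p=|\leaf(T)|^N$. For each $y\in T^N$, using the radius $r$ above, $\snr(y)$ is a nonempty subset of $B$, so $M_N f(y)\ge |\snr(y)\cap B|/|\snr(y)|=1$. Summing over $y$ gives $\|M_N f\|_p^p\ge |T|^N$, and therefore $\|M_N\|_p^p\ge \|M_N f\|_p^p/\|f\|_p^p\ge (|T|/|\leaf(T)|)^N$. Since a connected graph on at least three vertices must have a vertex of degree at least $2$, we have $|T|>|\leaf(T)|$, so the bound grows exponentially in $N$.

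The argument has essentially no moving parts once the test set is chosen, so the only real obstacle is that choice itself: one has to notice that $\leaf(T)^N$ is simultaneously (i) exponentially thinner than $T^N$ and (ii) large enough to contain the \emph{entire} outermost sphere about \emph{every} vertex of $T^N$ --- property (ii) being exactly the tree-specific input from the first paragraph. I expect the extension to the ``global antipode graphs'' mentioned in the introduction to amount to abstracting property (ii) into a hypothesis, the same test function $\mathbf{1}_{\leaf(T)^N}$ (or its analogue) then carrying the argument through verbatim.
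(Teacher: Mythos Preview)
Your proof is correct and follows essentially the same route as the paper: test $M_N$ against the indicator of a product set that contains every maximal-radius sphere, then read off the ratio. The only cosmetic difference is that the paper uses $\mathbf{1}_{\ant(T)^N}$ rather than your $\mathbf{1}_{\leaf(T)^N}$ (which, since $\ant(T)\subseteq\leaf(T)$, actually yields the slightly stronger bound $(|T|/|\ant(T)|)^N$ before weakening to the stated one), and your closing paragraph correctly anticipates the paper's abstraction to global antipode graphs.
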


\section{Global Antipode Graphs and Exponential Lower Bounds}\label{lowerbound}

Theorem \ref{treebound} will come as a corollary of an analogous result for a larger class of graphs, which we call \emph{global antipode graphs}. We begin with some necessary definitions.

\begin{definition}[Antipode]
Let $G$ be a graph and $x \in V(G)$ be a vertex. The antipode of $x$ in $G$, denoted $\antgx$, is the (nonempty) sphere centered at $x$ of maximal radius with respect to $\dg$.
\end{definition}

\begin{ex}[Examples of Antipodes]
In the graphs below, the vertices in the antipode of vertex $v$ are marked by empty circles:\\

\begin{center}
\framebox[.8\textwidth]{
\parbox{.8\textwidth}{
\begin{center}
\hspace{0in}\xygraph{
!{<0cm,0cm>;<1cm,0cm>:<0cm,1cm>::}
!{(0,0) }*+{\bullet}="a"
!{(1,0) }*+{\circ}="b"
!{(.5,.87) }*+{\circ}="c"
!{(-.5,.87)}*+{\circ}="d"
!{(-1,0)}*+{\circ}="e"
!{(-.5,-.87)}*+{\bullet_{v}}="f"
!{(.5,-.87)}*+{\circ}="g"
"a"-"b" "a"-"c"
"a"-"d" "a"-"e"
"a"-"f" "a"-"g"
}\qquad
\xygraph{
!{<0cm,0cm>;<1cm,0cm>:<0cm,1cm>::}
!{(.95,.31) }*+{\bullet_v}="a"
!{(.59,-.81) }*+{\circ}="b"
!{(-.59,-.81) }*+{\circ}="c"
!{(-.95,.31)}*+{\circ}="d"
!{(0,1)}*+{\circ}="e"
"a"-"b" "a"-"c" "a"-"d" "a"-"e"
"b"-"c" "b"-"d" "b"-"e"
"c"-"d" "c"-"e"
"d"-"e"
}\qquad
\xygraph{
!{<0cm,0cm>;<1cm,0cm>:<0cm,1cm>::}
!{(.95,.31) }*+{\bullet_v}="a"
!{(.59,-.81) }*+{\bullet}="b"
!{(-.59,-.81) }*+{\circ}="c"
!{(-.95,.31)}*+{\circ}="d"
!{(0,1)}*+{\bullet}="e"
"a"-"b" "a"-"e"
"b"-"c"
"c"-"d"
"d"-"e"
}\vspace{.1in}
\par
\hspace{0in}\xygraph{
!{<0cm,0cm>;<1cm,0cm>:<0cm,1cm>::}
!{(1,0)}*+{\bullet}="b"
!{(.5,.87)}*+{\bullet}="c"
!{(-.5,.87)}*+{\bullet_v}="d"
!{(-1,0)}*+{\bullet}="e"
!{(-.5,-.87)}*+{\bullet}="f"
!{(.5,-.87)}*+{\circ}="g"
"b"-"c"
"c"-"d"
"d"-"e"
"e"-"f"
"f"-"g"
"g"-"b"
}\quad
\xygraph{
!{<0cm,0cm>;<1cm,0cm>:<0cm,1cm>::}
!{(0,0) }*+{\bullet}="a"
!{(1,0) }*+{\bullet_v}="b"
!{(2,0) }*+{\bullet}="c"
!{(3,0) }*+{\circ}="d"
"a"-"b" "b"-"c" "c"-"d"
}\quad
\xygraph{
!{<0cm,0cm>;<1cm,0cm>:<0cm,1cm>::}
!{(0,-.5) }*+{\bullet}="a"
!{(1,-.5) }*+{\bullet}="b"
!{(2,-.5) }*+{\bullet}="c"
!{(0,.5) }*+{\circ}="d"
!{(1,.5) }*+{\circ}="e"
!{(2,.5) }*+{\bullet_{v}}="f"
"a"-"d" "a"-"e" "a"-"f"
"b"-"d" "b"-"e" "b"-"f"
"c"-"d" "c"-"e" "c"-"f"
}\end{center}}
}
\end{center}
\end{ex}
\vspace{.2cm}
\begin{definition}[Global Antipode]
If $G$ is a graph, the global antipode of $G$ is the subset of $V(G)$ given by
\[
\unig := \cup_{x \in V(G)} \antgx
\]
\end{definition}
\begin{ex}
In the graphs below, the vertices in the global antipode are marked by empty circles:\\

\begin{center}
\framebox[.8\textwidth]{
\parbox{.8\textwidth}{
\begin{center}
\hspace{0in}\xygraph{
!{<0cm,0cm>;<1cm,0cm>:<0cm,1cm>::}
!{(0,0) }*+{\bullet}="a"
!{(1,0) }*+{\circ}="b"
!{(.5,.87) }*+{\circ}="c"
!{(-.5,.87)}*+{\circ}="d"
!{(-1,0)}*+{\circ}="e"
!{(-.5,-.87)}*+{\circ}="f"
!{(.5,-.87)}*+{\circ}="g"
"a"-"b" "a"-"c"
"a"-"d" "a"-"e"
"a"-"f" "a"-"g"
}\qquad
\xygraph{
!{<0cm,0cm>;<1cm,0cm>:<0cm,1cm>::}
!{(.95,.31) }*+{\circ}="a"
!{(.59,-.81) }*+{\circ}="b"
!{(-.59,-.81) }*+{\circ}="c"
!{(-.95,.31)}*+{\circ}="d"
!{(0,1)}*+{\circ}="e"
"a"-"b" "a"-"c" "a"-"d" "a"-"e"
"b"-"c" "b"-"d" "b"-"e"
"c"-"d" "c"-"e"
"d"-"e"
}\qquad
\xygraph{
!{<0cm,0cm>;<1cm,0cm>:<0cm,1cm>::}
!{(.95,.31) }*+{\circ}="a"
!{(.59,-.81) }*+{\circ}="b"
!{(-.59,-.81) }*+{\circ}="c"
!{(-.95,.31)}*+{\circ}="d"
!{(0,1)}*+{\circ}="e"
"a"-"b" "a"-"e"
"b"-"c"
"c"-"d"
"d"-"e"
}\vspace{.1in}
\par
\hspace{0in}\xygraph{
!{<0cm,0cm>;<1cm,0cm>:<0cm,1cm>::}
!{(1,0)}*+{\circ}="b"
!{(.5,.87)}*+{\circ}="c"
!{(-.5,.87)}*+{\circ}="d"
!{(-1,0)}*+{\circ}="e"
!{(-.5,-.87)}*+{\circ}="f"
!{(.5,-.87)}*+{\circ}="g"
"b"-"c"
"c"-"d"
"d"-"e"
"e"-"f"
"f"-"g"
"g"-"b"
}\quad
\xygraph{
!{<0cm,0cm>;<1cm,0cm>:<0cm,1cm>::}
!{(0,0) }*+{\circ}="a"
!{(1,0) }*+{\bullet}="b"
!{(2,0) }*+{\bullet}="c"
!{(3,0) }*+{\circ}="d"
"a"-"b" "b"-"c" "c"-"d"
}\quad
\xygraph{
!{<0cm,0cm>;<1cm,0cm>:<0cm,1cm>::}
!{(.5,-.5) }*+{\circ}="a"
!{(1.5,-.5) }*+{\circ}="b"
!{(0,.5) }*+{\circ}="d"
!{(1,.5) }*+{\circ}="e"
!{(2,.5) }*+{\circ}="f"
"a"-"d" "a"-"e" "a"-"f"
"b"-"d" "b"-"e" "b"-"f"
}\end{center}}
}
\end{center}
\end{ex}
\vspace{.2cm}

Notice that for four of the graphs above, the global antipode is the entire vertex set. However, for the star and the path, this is not the case, leading to a key definition.

\begin{definition}[Global Antipode Graph]\label{GAG}
We call a graph $G$ a global antipode graph (hereafter GAG) if its global antipode is a proper subset of $V(G)$, i.e. $\unig \subsetneq V(G)$.
\end{definition}

For a tree $T$, it is straightforward to verify that for any vertex $t \in V(T)$, $\ant_T(t) \subset \leaf(T)$. Therefore any tree $T$ is a GAG and $\ant(T) \subset \leaf(T) \subsetneq V(T)$. We will discuss examples of GAGs with pathological properties in \S \ref{examples}.

\begin{lemma}\label{antpowers}
If $G$ and $H$ are GAGs, then the Cartesian product $G \cart H$ and the Cartesian power $G^N$ are also GAGs. Moreover, $\ant(G \cart H) = \unig \times \unih$ and $\ant(\powg) = \unig^N$.
\end{lemma}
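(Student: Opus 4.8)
The plan is to prove the claim for a single Cartesian product $G \cart H$ and then obtain the power statement by an immediate induction using $G^N = G^{N-1} \cart G$. So I focus on showing $\ant(G \cart H) = \unig \times \unih$, from which $\unig \cart \unih \subsetneq V(G) \times V(H) = V(G \cart H)$ follows because at least one of $\unig, \unih$ is a proper subset (indeed both are, since $G$ and $H$ are GAGs), hence $G \cart H$ is a GAG.

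The computation of $\ant(G \cart H)$ rests on Remark \ref{distancerelation}: for $(g,h),(g',h') \in V(G \cart H)$ we have $d_{G \cart H}\big((g,h),(g',h')\big) = \dg(g,g') + d_H(h,h')$. First I would record the elementary fact that the eccentricity of a vertex splits as a sum: writing $\epsilon_G(g) := \max_{g'} \dg(g,g')$ for the eccentricity of $g$ in $G$, the displayed distance formula gives $\epsilon_{G \cart H}\big((g,h)\big) = \epsilon_G(g) + \epsilon_H(h)$, since the maximum of a sum over a product set is the sum of the maxima (achieved by choosing $g'$ maximizing in $G$ and $h'$ maximizing in $H$ independently). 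Consequently, for a fixed base point $(g,h)$, a vertex $(g',h')$ lies in $\ant_{G \cart H}\big((g,h)\big)$ — i.e. realizes the maximal radius $\epsilon_{G \cart H}\big((g,h)\big)$ — if and only if $\dg(g,g') + d_H(h,h') = \epsilon_G(g) + \epsilon_H(h)$, and since $\dg(g,g') \le \epsilon_G(g)$ and $d_H(h,h') \le \epsilon_H(h)$ always hold, this sum condition forces equality in each coordinate: $\dg(g,g') = \epsilon_G(g)$ and $d_H(h,h') = \epsilon_H(h)$, i.e. $g' \in \antg(g)$ and $h' \in \anth(h)$. Thus $\ant_{G \cart H}\big((g,h)\big) = \antg(g) \times \anth(h)$.

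Now take the union over all base points: $\ant(G \cart H) = \bigcup_{(g,h)} \antg(g) \times \anth(h)$. I claim this equals $\unig \times \unih = \big(\bigcup_g \antg(g)\big) \times \big(\bigcup_h \anth(h)\big)$. The inclusion $\subseteq$ is clear since $\antg(g) \times \anth(h) \subseteq \unig \times \unih$ for each $(g,h)$. For $\supseteq$, given $g' \in \unig$ and $h' \in \unih$, pick $g$ with $g' \in \antg(g)$ and $h$ with $h' \in \anth(h)$; then $(g',h') \in \antg(g) \times \anth(h) \subseteq \ant(G \cart H)$. This establishes $\ant(G \cart H) = \unig \times \unih$. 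Since $G$ is a GAG there is a vertex not in $\unig$, so $\unig \times \unih \subsetneq V(G) \times V(H)$, proving $G \cart H$ is a GAG.

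For the power, induct on $N$. The base case $N = 1$ is trivial. For $N > 1$, $G^N = G^{N-1} \cart G$; by the inductive hypothesis $G^{N-1}$ is a GAG with $\ant(G^{N-1}) = \unig^{N-1}$, so the product statement just proved gives $\ant(G^N) = \ant(G^{N-1}) \times \unig = \unig^{N-1} \times \unig = \unig^N$, and $G^N$ is a GAG. I do not anticipate a serious obstacle here; the only point requiring a little care is the eccentricity-splitting observation and the argument that equality in a sum of two bounded quantities forces coordinatewise equality — both are routine once Remark \ref{distancerelation} is in hand. The combinatorial identity $\big(\bigcup_g A_g\big) \times \big(\bigcup_h B_h\big) = \bigcup_{g,h} A_g \times B_h$ is the one place where one should make sure the indexing is handled cleanly.
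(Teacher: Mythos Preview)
Your proposal is correct and follows essentially the same route as the paper: reduce to the product case, use the distance-splitting from Remark~\ref{distancerelation} to show $\ant_{G \cart H}\big((g,h)\big) = \antg(g) \times \anth(h)$, then take the union over all base points and conclude by induction for powers. The only cosmetic difference is that you phrase the key step via eccentricities and the inequality $\dg(g,g') \le \epsilon_G(g)$ forcing coordinatewise equality, whereas the paper phrases it by noting which spheres are empty; these are the same observation.
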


\begin{proof}
Given GAGs $G$ and $H$, it is suffices to prove only the claims concerning $\cartgh$ because the claims concerning $\powg$ immediately follow by induction. Furthermore, it suffices to prove that $\ant(\cartgh) = \ant(G) \times \ant(H)$ because, following Definition \ref{GAG}, if $\unig \subsetneq V(G)$ and $\unih \subsetneq V(H)$, then trivially $\unig \times \unih \subsetneq V(\cartgh)$.

Recall from Remark \ref{distancerelation} that
\[
d_{G \cart H}\big((g,h),(g',h')\big) = d_G(g,g') + d_H(h,h').
\]
Thus, if $a := \max \{r \geq 0 : \sph_r^G(g) \neq \emptyset\}$ and $b := \max \{r \geq 0 : \sph_r^H(h) \neq \emptyset\}$, then
\[\aligned
\sph_{a+b}^{\cartgh}(g,h) &= \sph_a^G(g) \times \sph_b^H(h),\\
\sph_{a+c}^{\cartgh}(g,h) &= \emptyset \text{ for } c > b, \text{ and}\\
\sph_{d+b}^{\cartgh}(g,h) &= \emptyset \text{ for } d > a.
\endaligned\]
The upshot of the computation is that
\[\aligned
\sph_{r}^{\cartgh}(g,h) &= \antg(g) \times \anth(h) \text{ for } r = a + b \text{ and}\\
\sph_{r}^{\cartgh}(g,h) &= \emptyset \text{ for } r > a + b
\endaligned\]
so $\ant_{\cartgh}\big((g,h)\big) = \antg(g) \times \anth(h)$. Finally we observe that
\[\aligned
\ant(\cartgh) &= \cup_{g \in G, h \in H} \ant_{\cartgh}\big((g,h)\big)\\
&= \cup_{g \in G} \cup_{h \in H} \antg(g) \times \anth(h)\\
&= \big(\cup_{g \in G} \antg(g)\big) \times \big(\cup_{h \in H} \anth(h)\big)\\
&= \unig \times \unih.
\endaligned\]
\end{proof}

The remainder of this section is dedicated to proving a dimensionally exponential lower bound for the $L^p$ norm for the spherical maximal operator on a GAG.

\begin{lemma}\label{identicallyone}
If $G$ is a GAG, then $M^G 1_{\unig}$ is identically equal to $1$.
\end{lemma}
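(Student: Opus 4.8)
The plan is to show that for every vertex $x \in V(G)$, the spherical average of $1_{\unig}$ over some sphere centered at $x$ equals $1$, which combined with the trivial bound $M^G \leq 1$ on $L^\infty$ (and $1_{\unig} \leq 1$ pointwise) forces $M^G 1_{\unig}(x) = 1$. The natural candidate sphere is the antipodal sphere $\antgx$, the sphere of maximal radius centered at $x$.

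First I would recall the defining property of the global antipode: $\unig = \cup_{y \in V(G)} \ant_G(y)$. The key claim is that $\antgx \subseteq \unig$ for every $x$. This is immediate from the definition, since $\antgx$ is one of the sets in the union defining $\unig$. Consequently, evaluating $1_{\unig}$ on the sphere $\antgx$ gives the constant value $1$ on that sphere, so
\[
\frac{1}{|\antgx|} \ee| \sum_{y \in \antgx} 1_{\unig}(y) \ii| = \frac{1}{|\antgx|} \cdot |\antgx| = 1.
\]
Since $\antgx = \sph_r^G(x)$ for the maximal admissible $r$, this radius is one of the radii over which the supremum defining $M^G 1_{\unig}(x)$ is taken, so $M^G 1_{\unig}(x) \geq 1$.

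For the reverse inequality, I would invoke the observation already made in the paper that $M^G$ has operator norm $1$ on $L^\infty(G)$; more directly, for any $f$ with $\|f\|_\infty \leq 1$ and any radius $r$ with $\sph_r^G(x) \neq \emptyset$, the triangle inequality gives $\frac{1}{|\sph_r^G(x)|} \ee|\sum_{\sph_r^G(x)} f(y)\ii| \leq 1$, hence $M^G f(x) \leq 1$. Applying this to $f = 1_{\unig}$, which satisfies $\|1_{\unig}\|_\infty \leq 1$, yields $M^G 1_{\unig}(x) \leq 1$. Combining the two inequalities gives $M^G 1_{\unig}(x) = 1$ for all $x$, as desired.

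There is essentially no hard step here; the only thing to be careful about is the trivial-seeming but essential point that $\antgx$ is nonempty (which is built into the definition of antipode) and lies entirely inside $\unig$, so that the sphere over which we average is both a legitimate choice in the supremum and consists only of points where the indicator is $1$. I would make sure to state explicitly that $\antgx \subseteq \unig$ so the reader sees that the average is exactly $1$ rather than something smaller.
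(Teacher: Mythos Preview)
Your proof is correct and follows essentially the same route as the paper: show $M^G 1_{\unig}(x)\ge 1$ by averaging over the nonempty antipodal sphere $\antgx\subset\unig$, and obtain the reverse inequality from the trivial $L^\infty$ bound $\|M^G\|_\infty=1$. As in the paper, the GAG hypothesis is not actually used in the argument---the conclusion holds for any graph (it just becomes most useful when $\unig$ is a proper subset).
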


\begin{proof}
For any $x \in V(G)$, by definition $\antgx \subset \unig$. Thus, because $\antgx$ is a nonempty sphere centered at $x$,
\[
M^G 1_{\unig}(x) \geq \frac{1}{|\antgx|} \ee|\sum_{\antgx} 1_{\unig}(y)\ii| = 1
\]

The opposite inequality, $M^G 1_{\unig}(x) \leq 1$, is nothing but the observation that averaging operators cannot increase maxima, i.e. $\|M^G\|_\infty = 1$.
\end{proof}

The main result follows quickly.

\begin{theorem}\label{GAGbound}
If $G$ is a GAG and $M_N$ is the spherical maximal operator on $G^N$, then for all $p<\infty$, $M_N$ satisfies the an exponential lower bound in the $L^p$ operator norm, namely
\[
\ee\|M_N\ii\|^p_p \geq \ee(\frac{|G|}{|\unig|}\ii)^N.
\]
\end{theorem}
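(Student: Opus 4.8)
The plan is to exhibit a single explicit test function realizing the claimed lower bound, namely the indicator of the global antipode of $\powg$, and to feed it through the two preceding lemmas. First I would apply Lemma \ref{antpowers} to conclude that $\powg$ is itself a GAG and that $\ant(\powg) = \unig^N$; in particular $1_{\unig^N} = 1_{\ant(\powg)}$ is a legitimate (nonzero) function on $V(\powg)$. Then I would apply Lemma \ref{identicallyone} to the GAG $\powg$ to obtain that $M_N 1_{\ant(\powg)}$ is identically equal to $1$ on all of $V(\powg)$.

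From this the estimate is an immediate counting computation. Set $f := 1_{\ant(\powg)}$. Since $p < \infty$ and we work with the counting measure, $\|f\|_p^p = |\ant(\powg)| = |\unig^N| = |\unig|^N$, using that the $p$-th power of an indicator is the indicator itself. On the other hand, $\|M_N f\|_p^p = \sum_{x \in V(\powg)} |M_N f(x)|^p = \sum_{x \in V(\powg)} 1 = |\powg| = |G|^N$. Dividing and taking the supremum over normalized inputs gives
\[
\|M_N\|_p^p \;\geq\; \frac{\|M_N f\|_p^p}{\|f\|_p^p} \;=\; \frac{|G|^N}{|\unig|^N} \;=\; \ee(\frac{|G|}{|\unig|}\ii)^N.
\]
Because $G$ is a GAG, $\unig \subsetneq V(G)$, so $|\unig| < |G|$ and the base $|G|/|\unig|$ exceeds $1$; hence the bound grows exponentially in $N$.

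I do not expect any real obstacle here: all of the structural work has already been carried out in Lemma \ref{antpowers} (identifying the antipode of a Cartesian power with the power of the global antipode) and Lemma \ref{identicallyone} (producing a function on which the maximal operator is identically $1$). The only points meriting a word of care are that the hypothesis $p < \infty$ is genuinely used — it is what lets $\|M_N f\|_p^p$ be the plain counting sum of $|M_N f(x)|^p$, and it must be excluded since $\|M_N\|_\infty = 1$ rules out any growth at the endpoint — and that $f$ is nonzero, which holds precisely because $\ant(\powg) = \unig^N$ is nonempty (antipodes of connected graphs are nonempty by definition).
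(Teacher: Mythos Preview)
Your proposal is correct and follows essentially the same approach as the paper: test $M_N$ on the indicator of $\ant(\powg)=\unig^N$, invoke Lemma~\ref{antpowers} to compute $\|f\|_p^p=|\unig|^N$, invoke Lemma~\ref{identicallyone} to get $M_Nf\equiv 1$ and hence $\|M_Nf\|_p^p=|G|^N$, and divide. Your added remarks on why $p<\infty$ is needed and why $f\neq 0$ are accurate and go slightly beyond what the paper spells out.
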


\begin{proof}
By Lemma \ref{antpowers}, $\ant(G^N) = \unig^N$ so $|\ant(G^N)| = |\unig|^N$. Thus, $\|1_{\ant(G^N)}\|_p^p =  |\unig|^N$ and by Lemma \ref{identicallyone},
\[
\|M_N 1_{\ant(G^N)}\|_p^p = \|1\|_p^p = |G^N| = |G|^N
\]
Therefore we immediately have the desired lower bound:
\[
\|M_N\|_p^p \geq = \ee|\frac{\|M_N 1_{\ant(G^N)}\|_p}{\|1_{\ant(G^N)}\|_p}\ii|^p = \ee(\frac{|G|}{|\unig|}\ii)^N.
\]
\end{proof}

Because any tree $T$ is a GAG and $\ant(T) \subset \leaf(T)$ (in particular $|\ant(T)| \leq |\leaf(T)|$), Theorem \ref{treebound} follows immediately from Theorem \ref{GAG}.


\section{Pathological Examples and Open Questions}\label{examples}

In all graph diagrams in this section, the vertices in the global antipode will be marked by empty circles. Moreover, unless otherwise specified, a ``minimal'' graph with respect to a property $P$ is a graph $G$ with property $P$ such that any other graph with property $P$ has at least as many vertices \emph{and} at least as many edges as $G$. A ``strongly minimal'' graph with respect to a property $P$ is a graph $G$ with property $P$ such that any other graph with property $P$ has strictly more vertices \emph{and} strictly more edges than $G$. We recall the following standard notation:

\begin{notation}
For a graph $G$, $\epsilon(x) = \epsilon_G(x)$ (we generally suppress the subscript) is the eccentricity of $x \in V(G)$. That is,
\[
\epsilon(x) = \max\{r: \sgrx \neq \emptyset\}.
\]
Also, $\radg$ and $\diamg$ are the radius and diameter of $G$ respectively. That is,
\[
\radg = \min_{x \in G} \eccx \leq \max_{x \in G} \eccx = \diamg.
\]
\end{notation}

Notice that $\antgx$ is the sphere of radius $\eccx$ centered at $x$.

\begin{remark}
The proofs in this section will mostly use only elementary graph theory and are not intended to be direct contributions to the overarching project of characterizing asymptotic maximal bounds for Cartesian powers of finite graphs. Rather, they are presented to provide intuition for the relationship between structure and asymptotic maximal bounds. For this reason we present intuitive proofs rather than simply computing minimal examples.
\end{remark}

Based on the notions of vertex eccentricity, radius, and diameter, we identify two classes of graphs that will likely be important in further explorations of maximal bounds for Cartesian powers. Then we will immediately establish their relationship to GAGs.

\begin{definition}[Eccentric Graph]
We call a graph $G$ an eccentric graph (or EG) if not all vertces have the same eccentricity. More concisely, $G$ is eccentric if and only if $\radg < \diamg$.
\end{definition}

\begin{definition}[Sphere Regular Graph]
We call a graph $G$ a sphere regular graph (or SRG) if the size of a sphere depends only on its radius, in particular, not on its center. In order words, for any $r$ and $x,y \in V(G)$, $|\sgrx| = |\sgr(y)|$.
\end{definition}

Notice that if $G$ is an EG, there exist $r$ and $x,y \in V(G)$ such that $\sgrx = \emptyset \neq \sgr(y)$so that $|\sgrx| = 0 < |\sgr(y)|$. Thus EGs are never SRGs.

\begin{lemma}\label{gagsareeccentric}
GAGs are eccentric.
\end{lemma}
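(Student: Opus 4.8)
The plan is to prove the contrapositive: assuming $G$ is \emph{not} eccentric, I will show $G$ is not a GAG, i.e. $\unig = V(G)$. So suppose $\radg = \diamg$, and denote this common value by $e$. Since $\radg \leq \eccx \leq \diamg$ for every $x \in V(G)$, the hypothesis forces $\eccx = e$ for all $x$. Consequently, for each vertex $x$ the antipode is exactly the sphere $\sph_e^G(x)$, and this sphere is nonempty because $e = \eccx = \max\{r : \sgrx \neq \emptyset\}$.

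Next I would check that every vertex lies in somebody's antipode. Fix an arbitrary $y \in V(G)$. Since $\epsilon(y) = e$, there is a vertex $x$ with $\dg(x,y) = e$. By symmetry of the metric $\dg$, this is the same as saying $y \in \sph_e^G(x) = \antgx \subseteq \unig$. As $y$ was arbitrary, $\unig = V(G)$, so $G$ is not a GAG. This establishes the contrapositive and hence the lemma.

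There is essentially no hard step here; the argument is purely definitional. The only points requiring care are the bookkeeping facts that on a finite connected graph $\eccx$ is well-defined, that the sphere realizing the eccentricity is nonempty (so the antipode genuinely equals $\sph_{\eccx}^G(x)$), and that $\dg$ is symmetric, so that ``$y$ is at distance $e$ from $x$'' and ``$x$ is at distance $e$ from $y$'' are interchangeable. One could phrase the whole thing directly rather than by contraposition --- starting from a vertex $y \notin \unig$ and deducing $\radg < \diamg$ --- but that merely restates the same computation, so the contrapositive formulation above is the cleanest.
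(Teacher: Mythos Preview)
Your proof is correct and is essentially the contrapositive of the paper's argument. The paper proceeds directly: it picks $x \in V(G)\setminus\unig$ and $y \in \antgx$, then uses symmetry of $\dg$ to observe that $x\notin\ant_G(y)$ forces $\eccx = d(x,y) < \epsilon(y)$, giving $\radg < \diamg$; this is exactly the ``direct'' rephrasing you describe in your final sentence.
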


\begin{proof}
Let $G$ be a GAG, $x \in V(G) \setminus \unig$, and $y \in \antgx$. By the definition of an antipode, $d(x,y) = \epsilon(y)$ if and only if $x \in \ant_G(y)$. Because $\ant_G(y) \subset \unig$, $x \not\in \ant_G(y)$ so we have the inequality $d(x,y) < \epsilon(y)$.
However, because $y \in \antgx$, $\radg \leq \eccx = d(x,y) < \epsilon(y) \leq \diamg$.
\end{proof}

\begin{cor}\label{gagsarebad}
GAGs cannot be sphere regular, distance regular, vertex transitive, or Cayley graphs.
\end{cor}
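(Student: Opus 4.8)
The plan is to funnel all four assertions through one statement: no GAG is sphere regular. Granting this, the corollary is immediate from the standard hierarchy of regularity conditions. If $G$ is vertex transitive, then an automorphism carrying $x$ to $y$ restricts to a bijection $\sgrx \to \sgr(y)$ for every $r$, so $|\sgrx| = |\sgr(y)|$ and $G$ is sphere regular; since every Cayley graph is vertex transitive, Cayley graphs are sphere regular as well. If $G$ is distance regular, then the number $|\sgrx|$ of vertices at distance $r$ from a vertex is one of the defining parameters of the graph and in particular does not depend on $x$, so $G$ is again sphere regular. Hence it suffices to rule out sphere regularity for GAGs.

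For that, I would simply combine Lemma \ref{gagsareeccentric} with the elementary observation recorded just after the definition of sphere regular graphs. By Lemma \ref{gagsareeccentric}, a GAG $G$ is eccentric, i.e. $\radg < \diamg$. Choose $x \in V(G)$ with $\eccx = \radg$ and $y \in V(G)$ with $\epsilon(y) = \diamg$, and put $r := \diamg$. Then $\sgr(y) \neq \emptyset$ while $\sgrx = \emptyset$ because $r > \eccx$; thus $|\sgrx| = 0 \neq |\sgr(y)|$ and $G$ is not sphere regular.

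The content of the argument is essentially all contained in Lemma \ref{gagsareeccentric}, which is already available; the remaining steps are bookkeeping. The one place to be careful is the two implications ``vertex transitive $\Rightarrow$ sphere regular'' and ``distance regular $\Rightarrow$ sphere regular'' — these are what reduce the whole corollary to eccentricity — so each warrants the explicit one-line justification given above. (Distance transitivity, which is sometimes what one means by such symmetry hypotheses, implies both vertex transitivity and distance regularity, so it is covered a fortiori.)
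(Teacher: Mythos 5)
Your proposal is correct and follows essentially the same route as the paper: reduce all four properties to sphere regularity (via vertex transitivity for Cayley graphs) and then invoke Lemma \ref{gagsareeccentric} together with the observation that eccentric graphs are never sphere regular. The only difference is that you spell out the one-line justifications that the paper dismisses as ``straightforward to verify from definitions.''
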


\begin{proof}
It is straightforward to verify from definitions that distance regular and vertex transitive graphs are sphere regular. Because eccentric graphs cannot be sphere regular, by Lemma \ref{gagsareeccentric}, GAGs cannot be SRGs. Because Cayley graphs are vertex transitive, the entire corollary follows.
\end{proof}


Although we introduced GAGs as a generalization of trees, this was only because trees are natural and common examples. Heuristically, GAGs are similar to trees in that some of their vertices are more central (analogous to a root) while some are more peripheral (analogous to leaves). It is easily verified that the GAG is the path $P_3$:

\[ \fbox{\xygraph{
!{<0cm,0cm>;<1cm,0cm>:<0cm,1cm>::}
!{(0,1) }*+{\circ}="a"
!{(1,1) }*+{\bullet}="b"
!{(2,1) }*+{\circ}="c"
"a"-"b" "b"-"c"
} } \]
\vspace{.2cm}

However, the class of GAGs is strictly larger than the class of trees.

\begin{proposition}\label{kitehourglass}
The kite graph (left) is a strongly minimal non-tree GAG and the hourglass graph (right) is a minimal GAG with no leaf vertices (strictly minimal with respect to edges):

\[ \fbox{ \xygraph{
!{<0cm,0cm>;<1cm,0cm>:<0cm,1cm>::}
!{(0,2) }*+{\circ}="a"
!{(2,2) }*+{\circ}="b"
!{(1,1) }*+{\bullet}="c"
!{(1,0)}*+{\circ}="d"
"a"-"b" "a"-"c"
"b"-"c"
"c"-"d"
}\hspace{.6cm}
\xygraph{
!{<0cm,0cm>;<1cm,0cm>:<0cm,1cm>::}
!{(0,2) }*+{\circ}="a"
!{(2,2) }*+{\circ}="b"
!{(1,1) }*+{\bullet}="c"
!{(0,0)}*+{\circ}="d"
!{(2,0)}*+{\circ}="e"
"a"-"b" "a"-"c"
"b"-"c" "c"-"d"
"d"-"e" "c"-"e"
} } \]
\end{proposition}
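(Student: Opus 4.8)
The plan is to establish two things for each of the two graphs: first, that it is actually a GAG (i.e.\ compute its global antipode and verify it is a proper subset of the vertex set), and second, that no graph with fewer vertices and/or fewer edges has the relevant property. For the kite graph $K$, label the apex (degree-$3$) vertex $c$, the two degree-$3$ ``shoulder'' vertices $a,b$ joined to each other and to $c$, and the pendant leaf $d$ attached to $c$. A short distance computation gives $\epsilon(a)=\epsilon(b)=\epsilon(c)=2$ (achieved at $d$, $d$, and $a/b$ respectively) while $\epsilon(d)=2$ as well, attained at $a$ and $b$; one checks $\ant_K(a)=\{d\}$, $\ant_K(b)=\{d\}$, $\ant_K(c)=\{a,b\}$, $\ant_K(d)=\{a,b\}$, so $\ant(K)=\{a,b,d\}\subsetneq V(K)$ and $K$ is a GAG. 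Since $d$ is a leaf, $K$ is not a tree only because of the triangle $abc$; it has $4$ vertices and $4$ edges. For the hourglass $H$, label the shared vertex $c$ and the two triangles $\{a,b,c\}$ and $\{d,e,c\}$ (with $a\sim b$, $d\sim e$). By symmetry $\epsilon(c)=1$ while every other vertex has eccentricity $2$, with $\ant_H(a)=\{d,e\}$, etc., so $\ant(H)=\{a,b,d,e\}=V(H)\setminus\{c\}\subsetneq V(H)$ and $H$ is a GAG; it has $5$ vertices and $6$ edges, and clearly no leaves.

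For the minimality claims I would argue by exhausting small cases. A non-tree GAG must contain a cycle, hence has at least as many edges as vertices; and a GAG must be eccentric (Lemma \ref{gagsareeccentric}), hence in particular cannot be complete or a single cycle (those are vertex-transitive, contradicting Corollary \ref{gagsarebad}), and must have diameter at least $2$. The plan is to check that on $3$ vertices the only connected graphs are $P_3$ (a tree) and $K_3$ (vertex-transitive, not a GAG), so any non-tree GAG needs $\geq 4$ vertices; and on $4$ vertices the connected non-tree graphs are $C_4$, the ``paw'' (triangle plus pendant edge $=$ kite), $K_4$ minus an edge (the ``diamond''), and $K_4$. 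Of these, $C_4$, the diamond, and $K_4$ are vertex-transitive or otherwise not eccentric (equivalently, have a global antipode equal to the whole vertex set), leaving the kite as the unique non-tree GAG on $4$ vertices, and it has only $4$ edges, fewer than any non-tree GAG on $5$ or more vertices. This gives strong minimality. For the hourglass, a leafless GAG has minimum degree $\geq 2$, so every vertex lies on a cycle; checking $3$ and $4$ vertices rules out all candidates (each is vertex-transitive or has full global antipode), and on $5$ vertices one verifies the hourglass ($6$ edges) is a GAG while any leafless non-GAG-excluded competitor with $\leq 6$ edges fails; hence it is minimal and strictly minimal in edge count.

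The main obstacle will be the case analysis in the minimality arguments: one must be careful to correctly enumerate all connected graphs on $3$, $4$, and $5$ vertices (up to isomorphism) and, for each, actually compute the global antipode rather than merely invoking eccentricity. The subtle point is that ``eccentric'' is necessary but \emph{not} sufficient for being a GAG --- an eccentric graph can still have $\ant(G)=V(G)$ --- so for each small candidate I genuinely need the antipode computation, for instance to rule out graphs like $C_5$ or the ``bull'' whose vertices have varying eccentricity but whose antipodes nonetheless cover everything. Once the enumeration is pinned down the individual distance computations are routine, so the work is in organizing the casework cleanly, which is exactly why the remark preceding the proposition flags that we prefer intuitive arguments; I would present the kite and hourglass verifications in full and then summarize the small-graph exhaustion compactly.
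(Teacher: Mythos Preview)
Your plan is essentially the paper's own: verify each graph is a GAG by direct computation of antipodes, then exclude smaller candidates by casework. Two concrete corrections are needed.

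First, a slip in the kite verification: the apex $c$ is adjacent to all three other vertices, so $\epsilon(c)=1$, not $2$, and $\ant_K(c)=\{a,b,d\}$; also $a,b$ have degree $2$, not $3$. The conclusion $\ant(K)=\{a,b,d\}$ is unaffected, but the computation as written is wrong. Relatedly, the diamond $K_4\setminus e$ is \emph{eccentric} (radius $1$, diameter $2$) and not vertex-transitive, so it is not excluded by the shortcut you invoke in your second paragraph; it fails to be a GAG only because the two eccentricity-$1$ vertices already place everything in the global antipode---exactly the antipode computation you correctly flag as indispensable in your third paragraph. The paper avoids enumerating all $4$-vertex graphs by arguing structurally: a $4$-vertex non-tree GAG must have a degree-$3$ vertex (else it is a path or $C_4$), and any edge among the remaining three vertices beyond a single one forces a second eccentricity-$1$ vertex, giving $\ant(G)=V(G)$.

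Second, and this is a genuine gap, your hourglass argument stops at $5$ vertices. To establish strict edge-minimality you must also rule out leafless GAGs on $\geq 6$ vertices with at most $6$ edges. The paper handles this explicitly: a connected leafless graph on $n$ vertices has minimum degree $\geq 2$, hence at least $n$ edges, so only $n=6$ with $6$ edges is in play; such a graph is $2$-regular, hence $C_6$, which is vertex-transitive and therefore not a GAG by Corollary~\ref{gagsarebad}. Without this step your conclusion ``strictly minimal in edge count'' is unjustified---nothing in your outline prevents a $6$-vertex leafless GAG with $6$ edges.
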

\vspace{.2cm}

\begin{proof}
Calculating global antipodes is straightforward so we will only prove minimality.

Kite: The only $3$ vertex graphs are are $P_3$ and $K_3$ so we first prove that there are no $4$ vertex non-tree GAGs other than the kite graph above. To this end, suppose $G$ is non-tree GAG with $4$ vertices. If there is no vertex of degree $3$, $G$ is a path and thus a tree so the will let $1$ be a vertex with degree $3$. Notice that $\antgx$ consists of all other vertices because $\eccx = 1$. Letting $0$, $2$, and $3$ denote the remaining $3$ vertices, there must be at least one edge between $0$, $2$, and $3$ or else $G$ would be a tree. Without loss of generality $2$ and $3$ are adjacent:

\[ \fbox{ \xygraph{
!{<0cm,0cm>;<1cm,0cm>:<0cm,1cm>::}
!{(0,2) }*+{\bullet_{3}}="a"
!{(2,2) }*+{\bullet_{2}}="b"
!{(1,1) }*+{\bullet_{1}}="c"
!{(1,0)}*+{\bullet_{0}}="d"
"a"-"b" "a"-"c"
"b"-"c"
"c"-"d"
} } \]
\vspace{.2cm}

If $2$ or $3$ is adjacent to $0$ (without loss of generality $2$ is adjacent to $0$), then $\epsilon(2) = 1$ so
\[
V(G) = \antg(1) \cup \antg(2) \subset \unig
\]
and thus $G$ is not a GAG. Therefore the kite is the only GAG with $4$ vertices. It follows that there are no other non-tree GAGs with $4$ edges because, if $G$ were such a graph, it would have $4$ edges and at least $5$ vertices so it could only be $P_5$, a tree.\\

Hourglass: Suppose $G$ is a GAG with no vertices of degree $1$ (i.e. leaves). Because $G$ is not a tree and it cannot be the kite graph, we just showed it must have at least $5$ vertices and at least $5$ edges. Suppose $G$ has $5$ vertices. Because each vertex has degree at least $2$, any pair of non-adjacent vertices must share a neighbor simply by the pigeonhole principle so $\diamg = 2$. By Corollary \ref{gagsareeccentric}, $\radg = 1$ so there exists a vertex, which we call $2$, connected to all other vertices. Moreover, because no vertices have degree $1$, there must be at least (actually strictly more than) one edge connecting two of the remaining vertices so without loss of generality we assume vertices $1$ and $2$ are adjacent.  Therefore $G$ contains the following subgraph:

\[ \fbox{ \xygraph{
!{<0cm,0cm>;<1cm,0cm>:<0cm,1cm>::}
!{(0,2) }*+{\bullet_0}="a"
!{(2,2) }*+{\bullet_1}="b"
!{(1,1) }*+{\bullet_2}="c"
!{(0,0)}*+{\bullet_3}="d"
!{(2,0)}*+{\bullet_4}="e"
"a"-"b" "a"-"c" "b"-"c" "c"-"d" "c"-"e"
} } \]
\vspace{0cm}

The only graph with at most one extra edge containing this subgraph is the hourglass, thus proving that all other GAGs without a vertex of degree $1$ have more edges or more vertices. Notice that the only graphs with more than $5$ vertices that have at most $6$ edges are either trees (and thus have vertices of degree $1$) or the cycle $C_6$. Because cycles are vertex transitive, by Corollary \ref{gagsarebad}, $C_6$ is not a GAG. Thus there cannot be another GAG with $6$ edges and no leaves.
\end{proof}

We observe also that the global antipode of a tree may be strictly smaller than its leaf set.

\begin{proposition}
The graph below is the strongly minimal tree whose leaf set is strictly larger than its global antipode:
\[ \fbox{ \xygraph{
!{<0cm,0cm>;<1cm,0cm>:<0cm,1cm>::}
!{(0,1) }*+{\circ}="a"
!{(1,1) }*+{\bullet}="b"
!{(2,1) }*+{\bullet}="c"
!{(3,1)}*+{\bullet}="d"
!{(4,1)}*+{\circ}="e"
!{(2,0)}*+{\bullet}="f"
"a"-"b" "b"-"c"
"c"-"d" "d"-"e"
"c"-"f"
} } \]
\end{proposition}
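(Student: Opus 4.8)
The plan is to reduce the statement to a short vertex count resting on one elementary observation. Recall that $\ant(G) = \bigcup_x \ant_G(x)$, so a vertex $\ell$ lies in $\ant(G)$ exactly when $d(x,\ell) = \epsilon(x)$ for some $x$. The observation I would record is: \emph{if $T$ is a tree and $\ell$ is a leaf with $\ell \notin \ant(T)$, then $\epsilon(\ell) < \diam(T)$.} To see this, pick $z$ with $d(z,\ell) = \epsilon(\ell)$; since $\ell \notin \ant(T)$ we in particular have $\ell \notin \ant_T(z)$, i.e. $d(z,\ell) < \epsilon(z)$, so $\epsilon(\ell) < \epsilon(z) \le \diam(T)$. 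Consequently a tree whose leaf set properly contains its global antipode must have a leaf of non-maximal (``non-peripheral'') eccentricity.

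Next I would verify that the displayed tree $T_0$ really has the property, by computing antipodes directly: with the labelling in the figure one gets $\ant_{T_0}(a)=\ant_{T_0}(b)=\{e\}$, $\ant_{T_0}(d)=\ant_{T_0}(e)=\{a\}$, and $\ant_{T_0}(c)=\ant_{T_0}(f)=\{a,e\}$, so that $\ant(T_0)=\{a,e\}\subsetneq\{a,e,f\}=\leaf(T_0)$. The point is that the pendant $f$ at the central vertex is a leaf that is never a farthest vertex, since any geodesic issuing from any vertex runs at least as far along one of the two legs of length $2$.

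For minimality, let $T$ be any tree with $\leaf(T)\supsetneq\ant(T)$, fix $\ell\in\leaf(T)\setminus\ant(T)$, and write $D=\diam(T)$; by the observation $\epsilon(\ell)<D$. Fix a diametral path $u_0u_1\cdots u_D$. Then $\ell$ lies off this path (it is not an endpoint, since that would force $\epsilon(\ell)\ge D$, and not an interior vertex, which has degree $\ge 2$), so the path-vertex $m$ nearest $\ell$ is well defined; put $t=d(\ell,m)\ge 1$. Since the geodesics from $\ell$ to $u_0$ and from $\ell$ to $u_D$ pass through $m$, we get $t+d(m,u_0)=d(\ell,u_0)\le\epsilon(\ell)\le D-1$ and $t+d(m,u_D)\le D-1$; using $d(m,u_0)+d(m,u_D)=D$, adding these yields $D\ge 2t+2\ge 4$, while each inequality on its own yields $d(m,u_0)\ge t+1$ and $d(m,u_D)\ge t+1$. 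Now count: the path contributes $D+1$ vertices, and the $t$ vertices of the geodesic from $\ell$ to $m$ other than $m$ are disjoint from it (otherwise one of them would be a path-vertex closer to $\ell$ than $m$), so $|V(T)|\ge D+1+t\ge 6$. If $|V(T)|=6$ then necessarily $t=1$, $D=4$, and hence $d(m,u_0)=d(m,u_D)=2$, forcing $m=u_2$; thus $T$ consists of the path $u_0u_1u_2u_3u_4$ together with a pendant edge at $u_2$, which has six vertices and five edges, and since a tree on six vertices has exactly five edges this is all of $T$, so $T=T_0$. Therefore every tree with the property other than $T_0$ has at least seven vertices and at least six edges, which is exactly strong minimality.

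The only delicate point is the sharpness of the count in the last step: one must extract enough from $\epsilon(\ell)<\diam(T)$ to pin down the precise shape of $T$ in the extremal case, not merely its order, which is why I would keep the two one-sided bounds $t+d(m,u_0)\le D-1$ and $t+d(m,u_D)\le D-1$ rather than working only with their sum. Everything else is routine finite graph theory, in keeping with the other arguments of this section.
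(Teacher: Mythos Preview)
Your proof is correct and takes a genuinely different route from the paper's. The paper proceeds by exclusion and enumeration: it first notes that in paths and stars the antipode of any leaf is the set of all other leaves, so the global antipode contains all leaves; hence a minimal example is neither a path nor a star, forcing $|V|\ge 5$; it then inspects the unique non-path non-star tree on five vertices, rules out degree-$4$ and degree-$5$ vertices at order six, and finally draws the three six-vertex trees of maximum degree $3$ and checks their global antipodes by hand. Your argument is metric rather than combinatorial: from the observation that any $\ell\notin\ant(T)$ has $\epsilon(\ell)<\diam(T)$ you hang such a leaf off a diametral path at its nearest point $m$, and the pair of inequalities $t+d(m,u_0)\le D-1$, $t+d(m,u_D)\le D-1$ (together with $d(m,u_0)+d(m,u_D)=D$) does double duty, yielding both the vertex bound $|V(T)|\ge D+1+t\ge 6$ and, in the equality case, the exact location $m=u_2$, $D=4$, $t=1$, after which the tree edge count forces $T=T_0$. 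The paper's approach is more elementary in that it requires no geodesic bookkeeping, but yours avoids all case-by-case checking, makes transparent why six is the threshold, and would adapt to analogous extremal questions where enumeration is impractical. (Incidentally, your opening observation uses neither that $\ell$ is a leaf nor that $T$ is a tree; it holds for any vertex of any graph.)
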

\vspace{0cm}
\begin{proof}
For path and star graphs, the antipode of any leaf is precisely the set of all other leaves. Thus the global antipode must include all leaves, so if $G$ is strongly minimal among trees with the property that $\leaf(G) \neq \unig$, it can be neither a star nor a path.

Because it is not a path it must have a vertex of degree at least $3$ and, because it is not a star, there must be more than $3$ other vertices so $|V(G)| \geq 5$. If $G$ has $5$ vertices, its vertices must have degree at most $3$ as it is not a star. The only $5$ vertex tree with a vertex of degree $3$ is the following:
\[ \fbox{ \xygraph{
!{<0cm,0cm>;<1cm,0cm>:<0cm,1cm>::}
!{(0,1)}*+{\circ}="a"
!{(1,1)}*+{\bullet}="b"
!{(2,1)}*+{\bullet}="c"
!{(3,0)}*+{\circ}="d"
!{(3,2)}*+{\circ}="e"
"a"-"b" "b"-"c"
"c"-"d" "c"-"e"
} } \]
\vspace{0cm}

Because the leaf set and the global antipode of the above graph are equal, it must be the case that $|V(G)| \geq 6$. If $G$ has $6$ vertices, it cannot have a vertex of degree $5$ as it would be a star. Moreover, the only tree with a vertex of degree $4$ is the following:
\[ \fbox{ \xygraph{
!{<0cm,0cm>;<1cm,0cm>:<0cm,1cm>::}
!{(0,1)}*+{\circ}="a"
!{(1,1)}*+{\bullet}="b"
!{(2,1)}*+{\bullet}="c"
!{(3,0)}*+{\circ}="d"
!{(3,1)}*+{\circ}="e"
!{(3,2)}*+{\circ}="f"
"a"-"b" "b"-"c"
"c"-"d" "c"-"e" "c"-"f"
} } \]
\vspace{.2cm}

Therefore the highest vertex degree in $G$ must be $3$ so it contains the following (disconnected) subgraph:
\[ \fbox{ \xygraph{
!{<0cm,0cm>;<1cm,0cm>:<0cm,1cm>::}
!{(0,1)}*+{\bullet_0}="a"
!{(1,1)}*+{\bullet_1}="b"
!{(2,1)}*+{\bullet_2}="c"
!{(3,1)}*+{\bullet_3}="d"
!{(4,1)}*+{\bullet_4}="e"
!{(2,0)}*+{\bullet_5}="f"
"b"-"c" "c"-"d" "c"-"f"
} } \]
\vspace{.2cm}

At least one of vertices $0$ and $4$, without loss of generality $4$, must connect to the central connected component containing $1$, $2$, $3$, and $5$. Without loss of generality $4$ connects to $3$. Connecting $0$ to one of the remaining vertices yields a graph isomorphic to one of the following:

\[ \fbox{ \xygraph{
!{<0cm,0cm>;<1cm,0cm>:<0cm,1cm>::}
!{(0,1)}*+{\circ}="a"
!{(1,1)}*+{\bullet}="b"
!{(2,1)}*+{\bullet}="c"
!{(3,1)}*+{\bullet}="d"
!{(4,1)}*+{\circ}="e"
!{(2,0)}*+{\bullet}="f"
"a"-"b" "b"-"c"
"c"-"d" "d"-"e"
"c"-"f"
}\hspace{.6cm}
\xygraph{
!{<0cm,0cm>;<1cm,0cm>:<0cm,1cm>::}
!{(0,0)}*+{\circ}="a"
!{(0,2)}*+{\circ}="b"
!{(1,1)}*+{\bullet}="c"
!{(2,1)}*+{\bullet}="d"
!{(3,0)}*+{\circ}="e"
!{(3,2)}*+{\circ}="f"
"c"-"b" "a"-"c"
"c"-"d"
"d"-"e" "d"-"f"
}\hspace{.6cm}
\xygraph{
!{<0cm,0cm>;<1cm,0cm>:<0cm,1cm>::}
!{(0,1)}*+{\circ}="a"
!{(1,1)}*+{\bullet}="b"
!{(2,1)}*+{\bullet}="c"
!{(3,1)}*+{\bullet}="d"
!{(4,0)}*+{\circ}="e"
!{(4,2)}*+{\circ}="f"
"a"-"b" "b"-"c"
"c"-"d" "d"-"e" "d"-"f"
} } \]
\vspace{.2cm}

Because only the leftmost graph has a leaf outside its global antipode, this proves the claim.
\end{proof}


Although GAGs cannot be sphere regular, they can in fact be regular.

\begin{proposition}\label{regulargag}
All regular GAGs have at least $10$ vertices. Moreover, there are multiple non-isomorphic examples that are minimal in both vertices and edges among regular GAGs.
\end{proposition}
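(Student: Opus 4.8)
The plan is to combine the structural constraints we already have on GAGs with a careful degree-counting argument. Recall from Lemma~\ref{gagsareeccentric} that a GAG $G$ is eccentric, so $\radg < \diamg$; pick a vertex $c$ with $\epsilon(c) = \radg$ and a vertex $p \notin \unig$ (which exists by definition of a GAG). Suppose $G$ is $k$-regular. The case $k \le 2$ is disposed of immediately: $k$-regular graphs with $k \le 2$ are disjoint unions of cycles (and single edges), hence connected ones are $K_2$, a single vertex, or a cycle $C_n$; cycles are vertex transitive and thus not GAGs by Corollary~\ref{gagsarebad}, and $K_2$ is too small. So we may assume $k \ge 3$, and in particular $|V(G)| \ge k+1 \ge 4$. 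Since $G$ is regular it is not a tree, and it is not the kite or hourglass (those are not regular), so a bit more bookkeeping already forces $|V(G)|$ up. The substantive work is to push the bound all the way to $10$.

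The key step is to exploit $\radg < \diamg$ together with regularity. Fix a center $c$ with $\epsilon(c) = \radg =: \rho$ and let $D := \diamg > \rho$. Let $x$ be a vertex with $\epsilon(x) = D$ and let $y \in \ant_G(x)$, so $d(x,y) = D$. I would first argue $\rho \ge 2$: if $\rho = 1$ then $c$ is adjacent to every other vertex, forcing $k = |V(G)| - 1$, i.e. $G = K_{|V(G)|}$, which is vertex transitive, contradicting Corollary~\ref{gagsarebad}. Hence $\rho \ge 2$ and $D \ge 3$, so $G$ contains an induced (or at least isometric) path on $D+1 \ge 4$ vertices realizing a shortest $x$–$y$ geodesic. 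Now count using regularity: along the BFS layers $S_i := \sph_i^G(x)$ for $0 \le i \le D$, every vertex has degree exactly $k \ge 3$, so no layer can be a single ``pendant'' vertex unless its neighbors in the adjacent layers are numerous. One makes this quantitative: $|V(G)| = \sum_{i=0}^D |S_i| \ge (D+1) + (\text{extra vertices forced by } k \ge 3 \text{ in the intermediate layers})$. The cleanest version is to show that a $k$-regular graph with $k \ge 3$ and diameter $\ge 3$ has at least, say, $8$ vertices, then separately rule out the sporadic $8$- and $9$-vertex $k$-regular eccentric graphs by checking that each is in fact sphere regular (equivalently, that $\unig$ is everything) — the latter is a finite check one can organize by noting that for $k \ge 3$ and few vertices the graph is highly constrained (e.g.\ $3$-regular on $8$ vertices: the cube $Q_3$, $K_{3,3}$ plus a perfect matching, the Wagner graph, etc., all of which are vertex transitive or at least have constant eccentricity).

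The main obstacle I expect is precisely this last finite case analysis: showing there is genuinely no $k$-regular GAG on $4,5,\dots,9$ vertices. The efficient route is not to enumerate all such graphs but to prove a lemma of the form ``if $G$ is $k$-regular with $k \ge 3$ and $|V(G)| \le 9$, then $G$ is sphere regular'' — or more modestly, that $\radg = \diamg$ — by playing the small number of layers off against the regularity constraint $|S_{i-1}| \cdot (k) \ge |S_i|$ and $|S_{i+1}| \cdot k \ge |S_i| \cdot(\text{something})$ at each step; with $|V(G)| \le 9$ and $k \ge 3$ there is simply not enough room to have both a radius-$\rho$ center and a strictly larger diameter while keeping every degree equal. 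Once the bound $|V(G)| \ge 10$ is in hand, the ``moreover'' clause — existence of several non-isomorphic minimal examples — is handled by exhibiting them directly: one displays two (or more) $k$-regular graphs on exactly $10$ vertices (for suitable $k$, e.g.\ $k=3$) that are eccentric and whose global antipode is a proper subset, and checks non-isomorphism by an invariant such as girth or the degree sequence of the complement. This existence half is routine picture-drawing and I would present it as such, in the same spirit as the earlier propositions in this section.
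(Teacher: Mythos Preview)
Your overall architecture matches the paper's: dispose of $k\le 2$ via vertex transitivity, use Lemma~\ref{gagsareeccentric} to force $\radg<\diamg$, then eliminate $k\ge 3$ regular graphs on $\le 9$ vertices case by case, and finally exhibit two non-isomorphic $3$-regular GAGs on $10$ vertices. The paper does exactly this, and for $|V(G)|\in\{5,6,7,9\}$ (and $|V(G)|=8$ with $k\ge 4$) it uses the same pigeonhole trick you sketch: any two non-adjacent vertices share a neighbor, so $\diamg=2=\radg$ and $G$ is not eccentric.

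The genuine gap is your handling of the one remaining case, $k=3$ and $|V(G)|=8$. You propose to prove the lemma ``$k$-regular with $k\ge 3$ and $|V(G)|\le 9$ implies $\radg=\diamg$,'' or alternatively to check that all cubic graphs on $8$ vertices are sphere regular (which you parenthetically equate with $\unig=V(G)$; these are not equivalent notions). Both versions of your lemma are \emph{false}. Here is a cubic graph on $8$ vertices with $\radg=2<3=\diamg$: take vertices $0,\dots,7$, edges $0$--$2$, $0$--$4$, $0$--$6$, $1$--$3$, $1$--$5$, $1$--$7$, together with the $6$-cycle $2$--$3$--$5$--$4$--$7$--$6$--$2$. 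One checks $\epsilon(2)=2$ while $\epsilon(0)=3$, so this graph is eccentric (hence not sphere regular). It is nevertheless \emph{not} a GAG: every vertex lies in the global antipode. So the cubic $8$-vertex case cannot be settled by eccentricity alone; the paper instead assumes a vertex $v\notin\unig$ exists and runs a genuinely structural argument (the induced subgraph on $V(G)\setminus\{0,1\}$ must be a $6$-cycle, the $S$-antipode of $v$ must be even, and so on) to reach a contradiction. Your plan would need to be amended to use the GAG hypothesis directly at this step. Also, ``$K_{3,3}$ plus a perfect matching'' has $6$ vertices, not $8$, so your enumeration of cubic $8$-vertex graphs is off; and for the ``moreover'' clause the paper actually draws the two $3$-regular $10$-vertex examples and distinguishes them by $|\unig|$ (or by $(\rad,\diam)$), which you should do rather than defer to ``routine picture-drawing.''
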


\begin{proof}
It is straightforward to verify that the following graphs are $3$-regular GAGs with $10$ vertices:
\[ \fbox{ \xygraph{
!{<0cm,0cm>;<1cm,0cm>:<0cm,1cm>::}
!{(0,0) }*+{\bullet}="a"
!{(-1,0) }*+{\circ}="b"
!{(1,0) }*+{\circ}="c"
!{(0,1)}*+{\circ}="d"
!{(-2,0)}*+{\circ}="e"
!{(-1,-1)}*+{\circ}="f"
!{(2,0)}*+{\circ}="g"
!{(1,-1)}*+{\circ}="h"
!{(-.71,1.71)}*+{\circ}="i"
!{(.71,1.71)}*+{\circ}="j"
"a"-"b" "a"-"c" "a"-"d"
"d"-"i" "d"-"j" 
"b"-"e" "b"-"f"
"c"-"h" "c"-"g"
"e"-"f" "f"-"h" "h"-"g" "g"-"j" "j"-"i" "i"-"e"
}\hspace{.6cm}
\xygraph{
!{<0cm,0cm>;<1cm,0cm>:<0cm,1cm>::}
!{(0,.3) }*+{\circ}="a"
!{(-1,.3) }*+{\circ}="b"
!{(1,.3) }*+{\bullet}="c"
!{(0,1.3)}*+{\bullet}="d"
!{(0,-.7)}*+{\bullet}="e"
!{(3,.3)}*+{\circ}="f"
!{(2,.3)}*+{\bullet}="g"
!{(4,.3)}*+{\circ}="h"
!{(3,1.3)}*+{\bullet}="i"
!{(3,-.7)}*+{\bullet}="j"
"a"-"b" "a"-"d" "a"-"e"
"b"-"e" "b"-"d"
"c"-"d" "c"-"e" "c"-"g"
"f"-"i" "f"-"h" "f"-"j"
"g"-"i" "g"-"j"
"h"-"i" "h"-"j"
} } \]
\vspace{.2cm}

Moreover, they are certainly not isomorphic, as seen by noting the graph on the left has a $9$ element global antipode while that on the right has a $4$ element global antipode. Alternatively, one can note that the graph on the left has radius $2$ and diameter $3$ while that on the right has radius $3$ and diameter $5$.

We will now show that there cannot exist regular GAGs with fewer than $10$ vertices. Notice first that any $2$-regular graph is a cycle and, as in Proposition \ref{kitehourglass}, cannot be a GAG. This also means that once we show there are no regular GAGs with fewer than $10$ vertices, the $3$-regular $10$ vertex GAGs above must have minimal edges among regular GAGs. 

Let $G$ be a regular GAG with a minimal size vertex set among regular GAGs. Because we know $G$ must be at least $3$-regular, immediately we know $|V(G)| \geq 4$. Because a complete graph is vertex transtive, $G$ cannot be $K_4$. If $|V(G)| = 5$, then $G$ must be at least $4$-regular because the sum of its vertices' degrees must be even. However this cannot hold because $G$ cannot be $K_5$.

Following this line of reasoning, if $|G(V)| = 6$, $G$ must be $3$ or $4$ regular. Either way, by the pigeonhole principle, if $x, y \in V(G)$ are not connected, they must share a neighbor so for any $x, y \in V(G)$, $d(x,y) \leq 2$. Therefore $\diamg \leq 2$. However, because $G$ is $3$ or $4$ regular, no vertex is connected to every other vertex and so $\radg \geq 2 \geq \diamg \geq \radg$ so $G$ is not eccentric. Thus, by Lemma \ref{gagsareeccentric}, $G$ cannot be a GAG.

Because $7$ is odd, if $|V(G)| = 7$, $G$ must be $4$ or $6$ regular. $6$ is impossible because $G$ is not complete and the same pigeonhole argument for the $|G(V)| = 6$ case above shows that $G$ is not eccentric if it is $4$-regular so it cannot be a GAG. Similarly if $|V(G)| = 9$, the same argument shows that it cannot be $4$, $6$, or $8$ regular. Once again, if $|V(G)| = 8$ then $G$ cannot be $4$ (or more) regular because this would prevent it from being eccentric. Thus, as long as $G$ is not $3$-regular with $8$ vertices, it must contain at least $10$ vertices as claimed.

Suppose $G$ is $3$-regular and $|V(G)| = 8$. As before, $\diamg \geq 3$ so there exist vertices $0$ and $1$ such that $d_G(0,1) \geq 3$. Because $0$ and $1$ cannot share a neighbor, by the pigeonhole principle $G$ must contain the following (disconnected) subgraph:

\[ \fbox{ \xygraph{
!{<0cm,0cm>;<1cm,0cm>:<0cm,1cm>::}
!{(-1.5,0) }*+{\bullet_0}="w"
!{(-.5,1) }*+{\bullet_2}="x"
!{(-.5,0) }*+{\bullet_4}="y"
!{(-.5,-1)}*+{\bullet_6}="z"
!{(1.5,0)}*+{\bullet_1}="a"
!{(.5,1)}*+{\bullet_3}="b"
!{(.5,0)}*+{\bullet_5}="c"
!{(.5,-1)}*+{\bullet_7}="d"
"a"-"b" "a"-"c" "a"-"d"
"w"-"x" "w"-"y" "w"-"z"
 } } \]
\vspace{.2cm}

We will refer to the vertices with odd labels as ``odd vertices'' and even labels as ``even vertices.'' Because $G$ is connected, there must be an edge between an even and an odd vertex and thus, $d_G(0,1)=3$. Because this holds for any pair of vertices that share no neighbors (in place of $0$ and $1$), $\diamg = 3$. Therefore any vertex with eccentricity $3$ is in the global antipode (in particular, $0$ and $1$).

Notice that the (possibly disconnected) subgraph $S$ of $G$ induced by the vertex set $V := \{2,3,4,5,6,7\}$ is $2$-regular because $G$ is $3$ regular and each vertex in $V$ connects to exactly one vertex outside of $V$. Thus $S$ is either composed of two $3$-cycles or one $6$-cycle.

Because $G$ is a GAG, there must be a vertex, without loss of generality $2$, that is not in $\unig$. However, if $S$ is composed of two $3$-cycles, there must exist at least one odd vertex, without loss of generality $7$, that is not in the same $3$-cycle as $2$ within $S$. This means $2$ and $7$ do not share a neighbor so $\epsilon_G(2) \geq 3 = \diamg$ so $2 \in \unig$, a contradiction.

This leaves only the possibility that $S$ consists of a single $6$-cycle and we can therefore partition $V$ into pairs that are mutually antipodal in $S$. Suppose that the antipode of $2$ in $S$ is an odd vertex, without loss of generality $1$ so $d_S(2,3) = 3$. Then any path between $2$ and $3$ in $G$ that is not contained in $S$ must contain either $0$ or $1$ and will therefore have length at least $3$ because $d_G(2,1) = 2$ and $d_G(0,3) \geq 2$. Therefore the antipode of $2$ in $S$ is even. Without loss of generality, it is $4$.

Notice that $d_G(2,4) = 2$ because they do not share an edge in $E(G) \setminus E(S)$ and, by merit of being an antipodal pair in $S$, $d_S(2,4) = 3$. Because $2 \not\in \unig$ and thus $2 \not\in \antg(4)$, there must be a vertex in $V(G)$ that is distance $3$ (in $G$) away from vertex $4$. This cannot be an even vertex because the even vertices are all connected to $0$ or $0$ itself. Moreover, because $2$ is the unique antipodal vertex of $4$ in $S$, vertices $3$, $5$, and $7$ are no further than $2$ from $4$ in $S$ and therefore in the larger graph $G$ as well. This only leaves the possibility that $d_G(4,1) = 3$.

However, if $d_G(4,1) = 3$, $4$ cannot share a neighbor with $1$ so it cannot have an odd neighbor. Thus, by the pigeonhole principle, the neighbors of $4$ are $0$, $2$, and $6$. In particular, this implies that $d_G(2,4) = 1$, contradicting our earlier observation that $d_G(2,4) = 2$. 
\end{proof}

The exponential lower bound of Theorem \ref{GAGbound} is not sharp, however the argument is a clean and relatively simple illustration of bad dimensional asymptotics for maximal bounds. Moreover, we have no reason to think that GAGs are the most general class of graphs with exponentially growing maximal bounds. Even if the result is generalized, however, the argument will most likely be more intricate and technical. Therefore the argument presented here for GAGs is pedagocally ideal.

We conclude by bringing up two currently open questions following from Theorem \ref{GAGbound}. The gap between graphs known to have dimension-independent maximal bounds (at this point, all finite cliques) and GAGs is wide. The author is currently working to bridge that gap by exploring what the dimensionally asymptotic bounds look like for graphs that are not as well behaved as cliques but not as poorly behaved as GAGs.

For instance, it is straightforward to verify that Cartesian powers of an SRG are also SRGs and that spherical averaging operators are $L^1$ contractions for a graph if and only if the graph is sphere regular. As in Remark \ref{distancerelation}, for any graph $G$, $\diam(G^N) = N \diamg$ and thus there are $N \diamg + 1$ spherical averaging operators on $G^N$. Thus, if $G$ is sphere regular and $M_N$ is the spherical maximal operator on $G^N$, then $M_N$ is the maximum of $N \diamg + 1$ linear operators bounded by $1$ in $L^1(G^N)$.

By the triangle inequality, $\|M_N\|_1 \leq N \diamg + 1$. Linearizing the maximal operator as in \cite{NS} (top of p. 151) and interpolating between the $L^1$ and $L^\infty$ endpoints provides the bound $\|M_N\|_p \leq (N \diamg+1)^{1/p} \sim N^{1/p}$ for all $p \leq \infty$. Therefore sphere regular graphs' maximal bounds grow far slower than exponentially in dimension, even if they are unbounded. This gives rise to a natural question:

\begin{question}\label{generality}
Under what conditions on a graph do the sphercal maximal $L^p$ bounds grow exponentially in dimension?
\end{question}

At the moment we suspect that a graph has exponentially growing bounds if and only if it is not sphere regular. It may be easier to prove the result for EGs before SRGs or it may turn out to only be true for EGs.

Another interesting follow-up question comes from an observation about our proof of Theorem \ref{GAGbound}. Notice that we relied on the maximal radius sphere centered at a given point to provide a large average. If we muted the effect of distant spheres, this proof falls apart and it is not clear if the bounds are still exponentially growing. The most natural way to formalize the notion of muting the effects of distant spheres is by examining the ball maximal operator in place of the sphere maximal operator (i.e. $Mf(x)$ is the maximal magnitude average over balls centered at $x$ rather than spheres centered at $x$).

\begin{question}\label{balls}
What is the dimensional growth rate for the ball maximal operator on Cartesian powers of GAGs?
\end{question}

\end{document}